\newtheorem*{theorem*}{Theorem}
\newtheorem{lemma}{Lemma}
\newtheorem{theorem}{Theorem}
\newtheorem{proposition}{Proposition}
\begin{document}
 
\title{A brute force computer aided proof of an existence result about extremal hyperbolic surfaces}

\author{Ernesto Girondo\thanks{Partially supported by grant MTM2016-79497-P} and Cristian Reyes\thanks{This work was funded by the CONICYT PFCHA/Becas Chile 72180175}}

\date{Departamento de Matem\'aticas, Universidad Aut\'onoma de Madrid (Spain) \\
\ \\
ernesto.girondo@uam.es, cristianr.reyes@estudiante.uam.es}

\maketitle

\begin{abstract}
Extremal compact hyperbolic surfaces contain a packing of discs of the largest possible radius permitted by the topology of the surface. It is well known that arithmetic conditions on the uniformizing group are necessary for the existence  of a second extremal packing in the same surface, but constructing explicit examples of this phenomenon is a complicated task.
We present a brute force computational procedure that can be used to produce examples in all cases.
\end{abstract}

\section{Introduction}

It is well known  that the radius of a metric disc embedded in a compact hyperbolic surface cannot be larger than certain bound depending only in the topology of the surface (cf. \cite{Bavard_1996}, \cite{Girondo_Gonzalez-Diez_1999},  \cite{Girondo_Nakamura_2007}). Recently, this result has been extended to embeddings of  $k$-packings (i.e.  collections of a number $k$ of pairwise disjoint metric discs of given radius) in hyperbolic surfaces, first in the orientable case (see \cite{Girondo_2018}) and then in the non-orientable case (\cite{preprint}). By the term \emph{extremal surface} we mean a (orientable or non-orientable) surface $S$ with an embedded $k$-packings realizing the upper bound for the radius, given by  
$$\cosh R = \frac{1}{2\sin \frac{k\pi}{6(k-\chi)} }.$$

\

Extremal surfaces may be studied from the point of view of discrete groups of M\"obius transformations. We can regard such a surface $S$ as
a quotient $S \simeq \mathbb{D} / K$ of the universal covering space $\mathbb{D}$, the unit disc, by a \emph{uniformizing group} $K$, which is a Fuchsian group or a proper NEC group depending on whether $S$ is orientable or not.

\

It has been observed that extremality translates into algebraic, and even arithmetic properties of the group $K$. First, a necessary and sufficient condition for the existence of an extremal $k$-packing in $S$ is that  the group $K$ must be contained in a (Fuchsian or NEC) triangle group $\Delta=\Delta(2,3,N)$ where $N=6(k-\chi)/k$, the index being  $6(k-\chi)$ or $12(k-\chi)$ for the orientable and the non-orientable case, respectively (\cite{Girondo_2018}, \cite{preprint}). It follows that topology does not impose any restriction to extremality for $k=1,2,3$ and $6$. On the other hand,  when $k$ does not divide $6\chi$, embedded $k$-packings  cannot be  as dense as they are in hyperbolic space, and therefore extremal $k$-packings do not exist for such topological classes of hyperbolic surfaces.

\

Actual existence of extremal surfaces for the admissible pairs $(k,\chi)$ (or, equivalently $(k, g)$, where $g$ stands for the orientable or non-orientable genus) has been shown either as a consequence of known results about inclusions of Fuchsian groups (\cite{Girondo_2018}, where a well-known theorem by Edmonds et al. \cite{Edmonds_Ewing_Kulkarni_1982} is used) or as a result of a clever combinatorial procedure called \emph{edge-grafting} (\cite{preprint}).

\

There is a natural question where, surprisingly,  arithmetic seems to play a role related to extremality. Asking if a extremal surface of genus $g$ may contain several distinct extremal $k$-packings translates into asking if the uniformizing group $K$ can be contained in two different (conjugate) copies of the corresponding triangle group $\Delta$. A combination of known results by Margulis, Singerman and Takeuchi shows that extremal $k$-packings are often necessarily unique within their surfaces, the possible exceptions corresponding to values $(k, g)$ such that the triangle group $\Delta$ is arithmetic (see \cite{Takeuchi_1977_a} for the definition of arithmeticity). This way, first  in \cite{Girondo_2018} for the orientable case and later in \cite{preprint} for the non-orientable case, it was proved the following

\begin{theorem*} If $X$ is a compact hyperbolic  $k$-extremal surface of genus $g$ and 
$$
N:=\frac{6(k-\chi)}{k} \not\in\{7,8,9,10,11,12,14,16,18,24,30\}
$$ then the extremal $k$-packing is unique in $X$.
\end{theorem*}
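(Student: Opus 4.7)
The plan is to prove the contrapositive: assuming that $X = \mathbb{D}/K$ carries two distinct extremal $k$-packings, I want to force $N$ into the excluded list. By the characterization quoted from \cite{Girondo_2018} and \cite{preprint}, such a pair of packings corresponds to two \emph{distinct} subgroups $\Delta_1,\Delta_2\subset\mathrm{Isom}(\mathbb{D})$, each isomorphic to $\Delta=\Delta(2,3,N)$, and each containing $K$ with the same finite index (namely $6(k-\chi)$ or $12(k-\chi)$, according as $X$ is orientable or not). The common subgroup $K$ witnesses that $\Delta_1$ and $\Delta_2$ are commensurable, so both lie inside the commensurator $C:=\mathrm{Comm}(\Delta_1)=\mathrm{Comm}(\Delta_2)$ in $\mathrm{Isom}(\mathbb{D})$.

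Next I would invoke Margulis' arithmeticity theorem: if $\Delta$ is \emph{not} arithmetic, then $C$ is itself discrete (of finite covolume) and contains each $\Delta_i$ with finite index. Thus $\Delta_1$ and $\Delta_2$ are finite-index subgroups of a common lattice $C$, which by Singerman's classification of finite-index inclusions of Fuchsian triangle groups (together with its NEC analogue) must itself be a triangle group, and the pair of signatures $(\Delta,C)$ must appear in Singerman's explicit tables. The main step, and the step I expect to be the principal obstacle, is a case-by-case inspection of these tables: for each admissible over-group $C$ one must verify that the number of conjugates of $\Delta$ inside $C$, together with the index $[C:K]$, leaves no room for two distinct copies of $\Delta$ to share the finite-index subgroup $K$. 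In concrete terms, if $\Delta_1$ and $\Delta_2$ are conjugate in $C$ by an element $g$, then $g$ must normalize $K$, and checking this against the signature data in Singerman's list forces $\Delta_1=\Delta_2$.

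Finally, I would close the argument by appealing to Takeuchi's classification \cite{Takeuchi_1977_a}: the triangle group $\Delta(2,3,N)$ is arithmetic precisely when $N$ belongs to $\{7,8,9,10,11,12,14,16,18,24,30\}$. Hence the non-arithmeticity hypothesis used when applying Margulis is equivalent to $N$ lying outside this list, and the theorem follows. The harder half of the work is really the triangle-group bookkeeping in the middle paragraph; Margulis and Takeuchi enter only as black boxes bracketing that combinatorial core.
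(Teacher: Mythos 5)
Your overall architecture --- commensurability of $\Delta_1$ and $\Delta_2$ through the common finite-index subgroup $K$, Margulis for non-arithmetic lattices, Singerman for the structure of the over-group, and Takeuchi to translate non-arithmeticity of $\Delta(2,3,N)$ into the exclusion of the eleven listed values of $N$ --- is exactly the combination of Margulis, Singerman and Takeuchi that the paper invokes (the paper does not reprove this statement; it cites \cite{Girondo_2018} and \cite{preprint}). The setup and the two outer steps of your argument are correct.

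The middle step is where your proposal goes wrong, and the correct fix is much simpler than what you describe. Once Margulis gives you that $C=\mathrm{Comm}(\Delta_1)=\mathrm{Comm}(\Delta_2)$ is discrete and contains $\Delta_1$ with finite index, $C$ is itself a triangle group, and Singerman's classification of finitely maximal Fuchsian signatures shows that $(0;2,3,N)$ never occurs as the \emph{smaller} signature in a proper finite-index inclusion of triangle groups: $\Delta(2,3,N)$ is maximal. Hence $C=\Delta_1$ and, by the same argument, $C=\Delta_2$, so $\Delta_1=\Delta_2$ and the two packings coincide. The ``case-by-case inspection of the tables'' that you single out as the principal obstacle is therefore vacuous --- the relevant entries of Singerman's list are empty --- and the mechanism you propose in its place is actually false: if $g\in C$ conjugates $\Delta_1$ onto $\Delta_2$, there is no reason for $g$ to normalize $K$; one only gets that $gKg^{-1}$ is some finite-index subgroup of $\Delta_2$, which yields nothing. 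Had $(2,3,N)$ failed to be maximal, your counting of conjugates would not obviously close the argument. Replace that paragraph by the maximality statement (together with the routine remark that in the non-orientable case one applies Margulis to the orientation-preserving index-two subgroups of the extended triangle groups $\Delta^{\pm}$) and the proof is complete.
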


\

Summarizing, a combination of topology/combinatorics and arithmetics restricts the possible pairs $(k,g)$ for which there could be an extremal $k$-surface of genus $g$ with several extremal $k$-packings to values such that

\begin{itemize}
\item Orientable case: $k$ divides $12g+6k-12$ and $N=\frac{12g+6k-12}{k} \in\{7,8,9,10,11,12,14,16,18,24,30\}$
\item Non orientable case: $k$ divides $6g+6k-12$ and  $N=\frac{6g+6k-12}{k}  \in\{7,8,9,10,11,12,14,16,18,24,30\}$.
\end{itemize}

Notice that these are necessary conditions. Showing that they are also sufficient is far from being obvious. In the first cases considered in the literature, namely those with $k=1$, see \cite{Girondo_Gonzalez-Diez_2002_2}, \cite{Girondo_Nakamura_2007}) the strategy employed was as follows. First, determine all possible extremal surfaces, something that can be done by finding a representative of all conjugacy classes of Fuchsian/proper NEC groups of the appropriate index inside the triangle group $\Delta$ with the help of some algebra package such as GAP. Then, perform an exhaustive metric study  (similar to what was done for instance in \cite{Girondo_Gonzalez-Diez_2002_2}; see Section \ref{sec:brute} below for some hints about what we mean with this) in order to determine if a second extremal $k$-packing exists in any of the surfaces.

\

The serious problem for extending this strategy to $k>1$ is that sometimes it is virtually impossible to list the huge number of subgroups $K<\Delta$ involved.

\section{The main result}

We shall prove  the following theorem:

\begin{theorem} \label{th_main} Let $g\ge 3$ and $k \ge 1$ be such that $k$ divides $6g+6k-12$. Then there exists a compact non-orientable $k$-extremal surface of genus $g$ with more than one extremal $k$-packing if and only if  
$$
N:=\frac{6g+6k-12}{k} \in\{7,8,9,10,11,12,14,16,18,24,30\}.
$$ 
In all other cases extremal $k$-packings are unique within compact non-orientable $k$-extremal surfaces of genus $g$.
\end{theorem}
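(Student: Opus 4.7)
The only-if direction is given by the theorem recalled in the introduction (combining Margulis, Singerman, and Takeuchi with the non-orientable triangle-group characterization of extremality from \cite{preprint}). So the work consists in proving the if direction: for each $N\in\{7,8,9,10,11,12,14,16,18,24,30\}$ and each admissible $k$ giving $g=(N-6)k/6+2\geq 3$, one must exhibit a compact non-orientable $k$-extremal surface of genus $g$ carrying at least two extremal $k$-packings.

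Algebraically this means producing a proper NEC subgroup $K$ of the NEC triangle group $\Delta=\Delta(2,3,N)$ of index $2Nk$ with the correct non-orientable signature (genus $g$, no proper periods or period cycles), such that $K$ is moreover contained in a second conjugate $\gamma\Delta\gamma^{-1}\neq\Delta$. The plan is to exploit the arithmeticity of $\Delta$ for the eleven listed values of $N$: Takeuchi's tables supply, for each such $N$, an explicit minimal overgroup $\Gamma\supset\Delta$ of small finite index inside the commensurator, and any $\gamma\in\Gamma$ not in the normalizer of $\Delta$ produces a distinct conjugate. The finite-index NEC subgroup $H:=\Delta\cap\gamma\Delta\gamma^{-1}$ has a presentation and signature computable by Todd--Coxeter coset enumeration, and any NEC subgroup of $H$ with the desired signature is automatically a valid example.

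Concretely the procedure would (i) for each of the eleven $N$'s, write down explicit presentations of $\Delta$ and $\Gamma$ together with the inclusion $\Delta\hookrightarrow\Gamma$; (ii) compute $H$ and its signature for a chosen $\gamma\in\Gamma\setminus\Delta$; (iii) for each admissible $k$, run a computer search (with GAP, for instance) for subgroups $K\leq H$ of the right index whose signature is non-orientable of genus $g$ with trivial period structure; (iv) certify the output by a direct check via Riemann--Hurwitz for NEC groups.

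The obvious difficulty is the combinatorial explosion of the subgroup lattice for large $k$, which is exactly the reason why the strategy of \cite{Girondo_Gonzalez-Diez_2002_2} does not extend naively beyond $k=1$. The brute-force procedure announced in the abstract must therefore be supported by a reduction step: once a base example is obtained for small $k$ for each $N$, examples for arbitrarily large admissible $k$ should be produced by a uniform covering or grafting construction (in the spirit of the edge-grafting of \cite{preprint}) that preserves the double inclusion $K\leq \Delta\cap\gamma\Delta\gamma^{-1}$. Showing that this reduction is compatible with the non-orientable signature requirements, and thus reaches every admissible pair $(k,g)$, is the main technical hurdle I anticipate.
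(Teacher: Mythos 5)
Your reduction of the problem is the right one: the only-if direction follows from the arithmeticity results already quoted, the reduction from arbitrary admissible pairs $(k,g)$ to the eleven primitive pairs $(k_N,g_N)$ is exactly how the paper proceeds (and, importantly, that reduction is not an open hurdle --- it is already proved in \cite{preprint} via edge-grafting, so you can simply cite it rather than flag it as the main technical difficulty). The genuine remaining content of the theorem is the explicit construction of one example for each primitive pair, and here your route diverges from the paper's and has concrete gaps. You propose to take $\gamma$ in the commensurator of $\Delta=\Delta^{\pm}(2,3,N)$, form $H=\Delta\cap\gamma\Delta\gamma^{-1}$, and search for $K\le H$ of the right signature. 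For this to even be possible you need, for each of the eleven $N$, a $\gamma$ for which $[\Delta:H]$ divides the required total index $2Nk_N$ (which is small for the primitive pairs, e.g.\ $84$ for $N=7$ with $k_7=6$, $g_7=3$), and you need $H$ to be a proper NEC group admitting a torsion-free non-orientable subgroup of genus $g_N$ at the residual index. Neither is verified, and neither is automatic: the indices $[\Delta:H]$ available from Takeuchi-type data are dictated by norms of primes in the invariant trace field and need not divide $2Nk_N$; and $H$ may fail to contain orientation-reversing elements or a subgroup with the required purely non-orientable signature. Since the theorem is ultimately an existence statement, "run a search and certify the output" is a plan, not a proof, until the search is shown to succeed in all eleven cases.

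The paper avoids the subgroup-lattice problem entirely by working geometrically rather than algebraically: it fixes a candidate fundamental domain $F$ made of $k_N$ regular $N$-gons, enumerates all possible side-pairing isometries of $F$, and uses the metric constraint of Lemma \ref{lemadist} (every $\gamma\in K$ must move a hidden disc center an "admissible distance", i.e.\ a distance between centers of the $(2,3,N)$ tessellation) to locate candidate centers of a second packing as intersections of the "banana" loci; it then assembles a side-pairing set compatible with one such candidate and exhibits the second packing explicitly, typically via an order-two isometry normalizing $K$. Your commensurator approach, if the index and signature obstructions above were checked case by case, would give a more conceptual certificate that $K$ lies in two distinct conjugates of $\Delta$; but as written it does not close the existence direction, whereas the paper's construction produces the required group generators and the second fundamental domain directly. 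I would either carry out the index/signature verification for all eleven $N$ (which is where your proof currently stops) or switch to a direct construction of the uniformizing group as the paper does.
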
 

A direct consequence, given by a simple argument involving the orientable double cover of non-orientable, $k$-extremal surfaces (see the last section of \cite{preprint}), is

\begin{theorem} \label{th_main2} Let $g \ge 2 $ and $k \ge 1$ be such that $k$ divides $12g+6k-12$. Then there exists a compact Riemann surface of genus $g$ that is  $k$-extremal and has more than one extremal $k$-packing if and only if  
$$
N:=\frac{12g+6k-12}{k} \in\{7,8,9,10,11,12,14,16,18,24,30\}.
$$ 
In all other cases extremal $k$-packings are unique within compact non-orientable $k$-extremal surfaces of genus $g$.
\end{theorem}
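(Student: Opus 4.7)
The plan is to derive Theorem \ref{th_main2} from Theorem \ref{th_main} via the orientable double cover of a non-orientable $k$-extremal surface. The ``only if'' direction is already contained in the Theorem* above, whose Margulis--Singerman--Takeuchi arithmeticity argument depends only on properties of the ambient triangle group $\Delta(2,3,N)$ and applies equally in the Fuchsian and NEC settings.

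For the ``if'' direction, fix an admissible orientable pair $(k',g')$ with $N' := (12g'+6k'-12)/k' \in \{7,8,9,10,11,12,14,16,18,24,30\}$ and suppose first that $k'$ is even. Set $k := k'/2$ and $g := g'+1$; a short calculation shows that $(k,g)$ is an admissible non-orientable pair with the same value of $N$. Theorem \ref{th_main} then produces a compact non-orientable $k$-extremal surface $S$ of non-orientable genus $g$ carrying two distinct extremal $k$-packings $P_1, P_2$. Let $\pi:\tilde S \to S$ be the orientable double cover; since $S$ is non-orientable the deck involution acts freely, so $\pi$ is a fixed-point-free local isometry. Each disc of $P_i$ then lifts to two disjoint isometric copies, and $\tilde P_i := \pi^{-1}(P_i)$ is a $2k$-packing of the same radius $R$. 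The relation $\chi(\tilde S) = 2\chi(S)$ gives $\tilde g = g-1 = g'$, while substituting $\chi(\tilde S) = 2\chi(S)$ and $2k$ into
$$\cosh R = \frac{1}{2\sin \frac{k\pi}{6(k-\chi)}}$$
shows that $R$ is simultaneously the extremal radius for $k$-packings on $S$ and for $2k$-packings on $\tilde S$. Distinctness of $\tilde P_1$ and $\tilde P_2$ is immediate, since projecting under $\pi$ recovers $P_1 \neq P_2$. Hence $\tilde S$ is the desired orientable $k'$-extremal Riemann surface of genus $g'$ carrying more than one extremal $k'$-packing.

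The main obstacle is the case of odd $k'$, for which this doubling construction gives no information (it only produces packings of even cardinality). A parity analysis of the admissibility relation $k' \mid 12(g'-1)$ shows that such pairs can only occur when $N' \in \{10,14,18,30\}$, but they still form an infinite family and so require a separate argument. The cleanest route is to invoke the previously established orientable existence results (the inclusion-of-Fuchsian-groups argument of \cite{Girondo_2018}, together with the $k=1$ analyses of \cite{Girondo_Gonzalez-Diez_2002_2} and \cite{Girondo_Nakamura_2007}), or alternatively to rerun the brute-force computational procedure of this paper on the Fuchsian side for the corresponding odd-$k'$ configurations; this is where any genuine extra work beyond the double-cover reduction would have to take place.
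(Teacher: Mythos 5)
Your reduction via the orientable double cover is exactly the route the paper takes (it delegates the details to the last section of \cite{preprint}), and your treatment of the even case is correct: for $k'=2k$ and $g=g'+1$ the value of $N$, the Euler characteristic and hence the extremal radius all match up, the $k$ discs lift to $2k$ pairwise disjoint discs because each disc is simply connected and the cover is free, and distinctness of the two lifted packings follows by projecting back down. The ``only if'' direction via the arithmeticity theorem is also fine.

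The problem is the odd case, which you correctly isolate (it occurs exactly for $N\in\{10,14,18,30\}$) but do not resolve. Saying that one could ``invoke the previously established orientable existence results'' or ``rerun the brute-force procedure on the Fuchsian side'' is a statement of intent, not a proof, and the citations you offer do not fill the hole: \cite{Girondo_2018} establishes the uniqueness half of the statement (the arithmeticity obstruction) and the existence of extremal surfaces, not the existence of surfaces carrying \emph{two} extremal $k$-packings; the $k=1$ literature (\cite{Girondo_Gonzalez-Diez_2002_2}, \cite{Girondo_Nakamura_2007}) does cover the orientable primitive pairs for $N=18$ and $N=30$, but for $N=10$ (primitive orientable pair $(k',g')=(3,2)$) and $N=14$ (primitive orientable pair $(k',g')=(3,3)$) no such reference exists in the paper, and these are precisely the pairs your doubling cannot reach, since the orientation double cover of any non-orientable example always yields an even number of discs. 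Moreover, even granting the four odd primitive pairs, you would still need to explain how the infinitely many remaining odd pairs follow; the standard mechanism is to pass to finite-index torsion-free subgroups of the primitive uniformizing group (these remain inside both conjugate copies of $\Delta(2,3,N)$ and multiply $k'$ by the index while adjusting the genus accordingly), and that propagation step is absent from your write-up. As written, the ``if'' direction is therefore established only for even $k'$.
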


Note that by the characterization of compact non-orientable $k$-extremal surfaces in terms of triangle groups (cf. \cite{preprint}), we know that if such a surface $S\simeq  \mathbb{H}\slash K$ admits two extremal $k$-packings, then we must have two inclusions $$K\le \Delta_1^{\pm}=\Delta_1^{\pm}\left(2,3,\frac{6g+6k-12}{k}\right),\quad  K\le \Delta_2^{\pm}=\Delta_2^{\pm}\left(2,3,\frac{6g+6k-12}{k}\right)$$ both with index $12g+12k-24$. 

As a consequence, a compact non-orientable  $k$-extremal surface whose NEC group $K$ is contained in a non-arithmetic extended triangle group $\Delta^{\pm}$, has a unique extremal $k$-packing (see \cite{preprint}). This happens whenever $$\frac{6g+6k-12}{k}\not\in\{7,8,9,10,11,12,14,16,18,24,30\}$$

\

The difficult statement is the converse, since there is apparently no theoretical reason ensuring the existence of the uniformizing group $K$ with the required properties for all the pairs $(k,g)$ involved. We are forced to find explicitly one example for every such pair $(k,g)$. 

\

Taking the terminology from  \cite{preprint}, for $N\in\{7,8,9,10,11,12,14,16,18,24,30\}$ a pair $(k, g)$ such that $\frac{6g+6k-12}{k}=N$ is called  \emph{primitive} if both $k$ and $g$ are minimal among all the pairs $(k,g)$ verifying the same relation with $N$. We denote $(k_N,g_N)$ the primitive pair for a given $N$. The existence of extremal non-orientable $k_N$-extremal surfaces of genus $g_N$ for the eleven primitive pairs (one for each of the relevant values of $N$) is enough to ensure the existence of $k$-extremal surfaces of genus $g$ for all the pairs $(k,g)$ in Theorem \ref{th_main}, see \cite{preprint}.

\

Notice that four of the eleven required surfaces can be found already in the literature, since they correspond to the cases for which $k_N=1$ (see \cite{Girondo_Nakamura_2007}, \cite{Nakamura_2009}, \cite{Nakamura_2012}, \cite{Nakamura_2013} and \cite{Nakamura_2016}). These surfaces  correspond to $N=12, 18, 24$ or $30$. 
For the cases $N=8,9,10$ it is still  possible to do an exhaustive analysis of all  compact non-orientable $k_N$-extremal of genus $g_N$ (similar to that one done in \cite{Girondo_Nakamura_2007}), since we can compute all proper NEC uniformizing subgroups of the extended triangle group $\Delta^{\pm}(2,3,N)$ with the required index. But the computation of all the corresponding subgroups of $\Delta^{\pm}(2,3,N)$ with $N=7, 11,14,16$ is a inadequate  approach, since there are sometimes hundreds of thousands of such surfaces.  We propose a different strategy, that is the brute-force procedure we refer to in the title of this paper, and we devote the rest of the paper to explain the details.

\section{The brute force construction} \label{sec:brute}

For a given $N$, any compact non-orientable $k_N$-extremal surface of genus $g_N$ is uniformized by a proper NEC subgroup $K$ of index $2k_NN$ inside a triangle group $\Delta^{\pm}(2,3,N)$. Such a group admits a fundamental domain $F$ consisting of a union of $k_N$ regular $N$-gons of angle $2\pi /3$, the extremal $k_N$-packing being simply induced by the discs inscribed to these polygons.

\begin{figure}[!htbp]
\begin{center}
\includegraphics[width=0.9\textwidth]{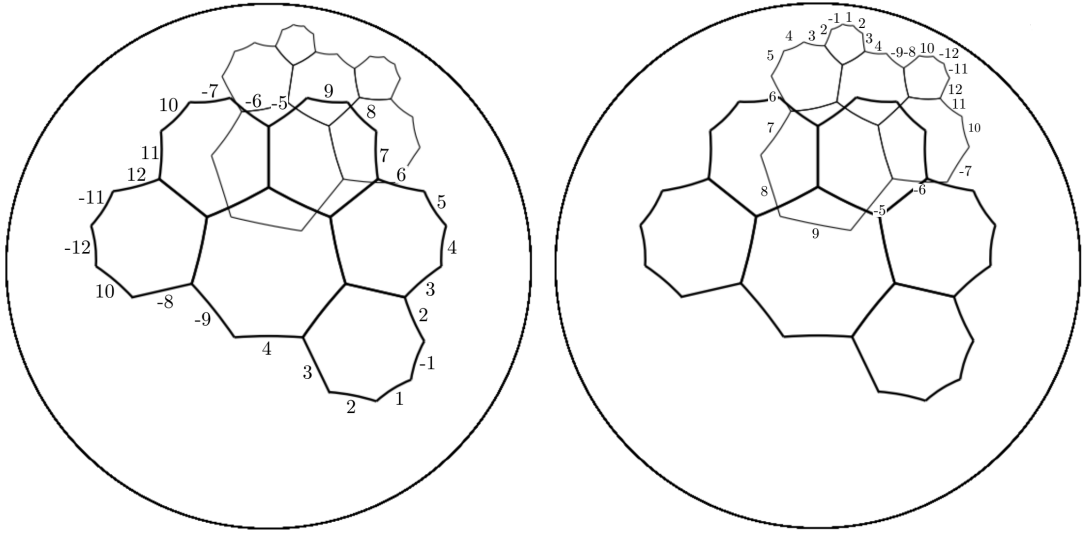} 
\caption{A primitive example with two different extremal $k$-packings for the case $N=7$. The figure shows two different fundamental domains for the same proper NEC uniformizing group $K$, both decomposed as the union of six regular heptagons of angle $2\pi /3$. The numbers indicate side-pairing transformations generating $K$, and a minus sign means that the corresponding transformation is orientation-reversing. Surprisingly, both sets of side pairings generate the same group $K$.}\label{N7}
\end{center}
\end{figure}

If the resulting extremal surface $S\simeq \mathbb{D}/K$ admits a second extremal $k_N$-packing, the group $K$ would have a second fundamental domain $F'$ isometric to $F$ but not $K$-related to it (Figure \ref{N7} shows an example of this phenomenon in the case $N=7$, where $k_7=6$ and $g_7=3$). The main remark here is that the $k_N$ centers of the $N$-gons forming $F'$ must be very special points, since they must  respect very strong metric conditions with respect to $K$. The following lemma, taken from \cite{Girondo_Nakamura_2007}, describes such conditions.

\begin{lemma}\label{lemadist}
Let $S$ be a compact non-orientable $k$-extremal surface of genus $g$ uniformized by $K$, and let $\pi:\mathbb{D}\longrightarrow \mathbb{D}/K \simeq S$ be the natural projection. Let $p=[z]_K$ be the center of one of the $k$ discs forming an extremal $k$-packing inside $S$, and denote $N=\frac{6g+6k-12}{k}$ and $\mathcal{T}$ a tessellation of $\mathbb{D}$ by regular $N$-gons of angle $2\pi / 3$. Then, for every $\gamma \in K$ the distance $d(z, \gamma(z))$ must agree with the distance between some pair of polygon centers of $\mathcal{T}$. We call these \emph{admissible distances}.
\end{lemma}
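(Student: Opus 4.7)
The plan is to exploit the inclusion $K \le \Delta^{\pm}(2,3,N)$ recalled earlier in the paper and the fact that the tessellation $\mathcal{T}$ is $\Delta^{\pm}$-invariant. Indeed, the characterization of compact non-orientable $k$-extremal surfaces in terms of triangle groups says that $K$ is a proper NEC subgroup of the extended triangle group $\Delta^{\pm}(2,3,N)$ of index $2kN$. Since the $N$-gons of $\mathcal{T}$ are precisely the duals of the triangles of the $(2,3,N)$-triangulation of $\mathbb{D}$, the group $\Delta^{\pm}(2,3,N)$ permutes them, and a fortiori every $\gamma \in K$ is an isometry of $\mathbb{D}$ that sends polygons of $\mathcal{T}$ to polygons of $\mathcal{T}$, and hence polygon centers to polygon centers.

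Next, I would fix a lift $z \in \mathbb{D}$ of $p=[z]_K$ inside the fundamental domain described at the beginning of this section: this $F$ is a union of $k$ regular $N$-gons of angle $2\pi/3$ whose inscribed discs constitute the extremal $k$-packing, so the $k$ packing-disc centers in $F$ are exactly the $k$ centers of these $N$-gons. Consequently $z$ can be chosen to be one of those centers and is in particular a polygon center of $\mathcal{T}$. Combining this with the previous paragraph, for every $\gamma\in K$ the point $\gamma(z)$ is again a polygon center of $\mathcal{T}$, and therefore $d(z,\gamma(z))$ is by definition the hyperbolic distance between a pair of polygon centers of $\mathcal{T}$, i.e.\ an admissible distance.

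There is essentially no hard step here; the only thing worth checking is that the conclusion is independent of the choice of lift $z$ of $p$. Any two such lifts differ by an element of $K$, which is itself an isometry preserving $\mathcal{T}$, so the whole set $\{d(z,\gamma(z)):\gamma\in K\}$ and its interpretation as a set of inter-center distances of $\mathcal{T}$ does not depend on which packing-disc center lift of $p$ is used. The main conceptual content of the lemma — and the reason it is useful for the brute force search described in the title — is not the proof itself but the observation that this very restrictive set of admissible distances acts as a strong sieve on which elements of $K$ can possibly map one putative new packing center to another.
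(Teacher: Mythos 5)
Your argument is correct, and in fact the paper offers no proof of this lemma at all --- it is imported verbatim from \cite{Girondo_Nakamura_2007} --- so there is nothing in the text to compare against; what you wrote is the standard argument one would expect to find in that reference. The two ingredients are exactly right: the tessellation $\mathcal{T}$ is the dual of the $(2,3,N)$-triangle tessellation, hence invariant under $\Delta^{\pm}(2,3,N)$ and a fortiori under $K$, and the lift $z$ of a packing-disc center is a polygon center of $\mathcal{T}$, so $\gamma(z)$ is again one. The only point worth tightening is that the lemma is stated for \emph{an} extremal $k$-packing of $S$, not necessarily the one attached to the fundamental domain $F$ and the fixed inclusion $K\le\Delta^{\pm}$: a hidden packing has its centers at polygon centers of a \emph{conjugate} tessellation $g\mathcal{T}$ coming from a second inclusion $K\le g\Delta^{\pm}g^{-1}$. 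Since $g\mathcal{T}$ is congruent to $\mathcal{T}$, its set of inter-center distances is the same, so the conclusion is unaffected; but you should say this explicitly, because the whole use of the lemma in the brute-force search is precisely to constrain the centers of packings \emph{other} than the obvious one. Your closing remark about independence of the choice of lift is fine but not really needed, since the statement only ever refers to the set of distances $d(z,\gamma(z))$, which is manifestly $K$-conjugation invariant.
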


We need to compute as many admissible distances as we can, in order to have a precise idea about the possible displacement of centers of discs belonging to \emph{hidden} extremal $k_N$-packings. The following observation is useful for this purpose:

\begin{lemma} \label{le:dists}
 Consider a tessellation $\mathcal{T}$ of $\mathbb{D}$ given by copies of a regular $N$-gon $P_0$ of angle $\frac{2\pi}{3}$ centered at the origin $0$. Consider a triangle  $\triangle ABC$ of angles $\pi/3, \pi / N$ and $\pi / 2$ with vertices at a vertex $A$ of $P_0$, the origin $B=0$ and an edge midpoint $C$. Denote by $a,b,c$ the reflections with respect to the lines $\overline{BC},\overline{CA}$ and $\overline{AB}$ respectively, and let $R_{m}=(ca)^{m}ab$. 

Given any polygon $P$ in $\mathcal{T}$, there is a transformation of the form $R_{i_1}R_{i_2}\cdots R_{i_k}$ that sends $P_0$ to $P$, where  $0 \le i_j < N$ for $j=1, \ldots, k$.
\end{lemma}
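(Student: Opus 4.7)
The plan is to prove the statement by induction on the combinatorial distance $d(P_0,P)$ in the dual graph of $\mathcal{T}$ (two polygons adjacent iff they share an edge). The crucial preliminary observation is to identify geometrically the two factors of $R_m=(ca)^m(ab)$. Since $A,B,C$ are the vertices of a $(3,N,2)$-triangle, $\rho:=ca$ is a rotation of angle $2\pi/N$ around $B=0$ (the center of $P_0$), while $\sigma:=ab$ is a rotation of angle $\pi$ around $C$ (the midpoint of a side of $P_0$). In particular, $\sigma(P_0)$ is the unique polygon $P_0^{*}$ sharing that edge with $P_0$, and $\rho$ acts on the set of $N$ neighbors of $P_0$ as a cyclic permutation. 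Consequently $R_m(P_0)=\rho^m(P_0^{*})$ enumerates bijectively the $N$ neighbors of $P_0$ as $m$ ranges over $\{0,1,\ldots,N-1\}$.

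For the induction, the base cases $d(P_0,P)=0$ (empty product) and $d(P_0,P)=1$ (provided by the previous paragraph) are immediate. Assume the claim for all polygons at distance $\le d$, and let $P$ satisfy $d(P_0,P)=d+1$. Choose a neighbor $P'$ of $P$ with $d(P_0,P')=d$ and, by the inductive hypothesis, write $P'=T(P_0)$ with $T=R_{i_1}\cdots R_{i_d}$ and $0\le i_j<N$. As a product of elements of the extended triangle group $\Delta^{\pm}(2,3,N)$, the isometry $T$ preserves the tessellation $\mathcal{T}$ and therefore carries neighbors of $P_0$ to neighbors of $T(P_0)=P'$. Since $P$ is a neighbor of $P'$, the polygon $T^{-1}(P)$ is a neighbor of $P_0$, and the preliminary observation supplies some $j\in\{0,\ldots,N-1\}$ with $T^{-1}(P)=R_j(P_0)$. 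Then
\[
P = T\bigl(R_j(P_0)\bigr) = R_{i_1}R_{i_2}\cdots R_{i_d}R_j(P_0),
\]
which closes the induction.

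The only step requiring real care is the preliminary geometric identification of $\rho$ and $\sigma$: once one is sure that the $N$ products $R_0(P_0),\ldots,R_{N-1}(P_0)$ cover all neighbors of $P_0$, the rest of the argument is a mechanical induction exploiting that $T$ is an isometry of $\mathcal{T}$. I do not expect any further obstacle; in particular one does not need to appeal to the group-theoretic structure of $\Delta^{\pm}(2,3,N)$ beyond the fact that each $R_m$ lies in it and hence permutes the polygons of~$\mathcal{T}$.
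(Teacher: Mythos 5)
Your proof is correct and follows essentially the same route as the paper's: an induction on the length of a chain of pairwise adjacent polygons from $P_0$ to $P$, using that each $R_m$ carries $P_0$ to one of its neighbors and that the partial product $T$ preserves $\mathcal{T}$, hence carries neighbors of $P_0$ to neighbors of $T(P_0)$. The only (welcome) difference is that you spell out the base case the paper dismisses as obvious, by identifying $ca$ and $ab$ as rotations of angles $2\pi/N$ and $\pi$ so that $R_0(P_0),\ldots,R_{N-1}(P_0)$ enumerate all $N$ neighbors of $P_0$.
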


\begin{proof}
If we select an arbitrary polygon $P$ in the tessellation, we can always connect the origin to its center by a hyperbolic polygonal curve $\Gamma$ joining centers of a sequence of pairwise adjacent polygons $P_0 , P_1, \ldots , P_k=P$. Figure \ref{14-7} illustrates the construction for an example in the case $N=7$. 

\

\begin{figure}[!htbp]
\begin{center}
\includegraphics[width=0.4\textwidth]{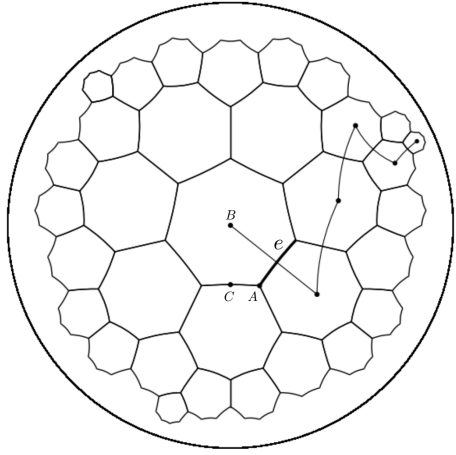} 
\caption{The tesselation $\mathcal{T}$ and a sketch of the construction for the case $N=7$. In this case $k=5$ and  $i_1=1, i_2=6, 
i_3=4, i_4=1, i_5=5$.}\label{14-7}
\end{center}
\end{figure} 

Now, label the edges of the central polygon counterclockwise, with the label $0$ at the edge containing the point $C$, and let $i_1$ be the label of the edge $e$ where $\Gamma$ cuts the central polygon $P_0$. We assign then the label $0$ to $e$ considered as an edge of the polygon $P_1$ meeting $P_0$ along $e$, and complete the labelling $1,2,\ldots$ of the remaining edges of $P_1$ counterclockwise. Next, define $i_2$ as the label of the edge of $P_1$ where $\Gamma$ leaves $P_1$, and so on. This way we construct a sequence of numbers $i_1, i_2, \ldots, i_k$. We claim that the transformation
$R_{i_1}R_{i_2}\cdots R_{i_k}$ sends $P_0$ to $P_k=P$.

\

The proof can be done by induction in the index $k$. Taking $k=1$ corresponds to the case when $P$ is a direct neighbor of the central polygon $P_0$, therefore this case is absolutely obvious.

Now, assume that the statement is true for polygons whose center can be joined to the origin by polygonal lines with $k$ segments. Assume $\Gamma$ is a polygonal line connecting the origin to the center of certain polygon $P$, crossing the sequence $P_0, P_1, \ldots , P_{k}, P_{k+1}=P$, and define $R_{i_1}R_{i_2}\cdots R_{i_k}R_{i_{k+1}}$ as above. By the induction hypothesis, the transformation $R=R_{i_1}R_{i_2}\cdots R_{i_k}$ sends $P_0$ to $P_k$, therefore $RR_{i_{k+1}}$ sends $P_0$ to one of the neighbors of $P_k$ that, by construction, is precisely 
 $P_{k+1}$.
\end{proof}

Note that the set of $N^k$ transformations $\mathcal{R}_k = \{R_{m_1}R_{m_2}\cdots R_{m_k} , \ 0\le m_i < N \}$ contains elements that move the central polygon $P_0$ to any polygon that can be connected to $P_0$ with a polygonal $\Gamma$ of $k$ steps. A collection of admissible distances of the tessellation $\mathcal{T}$ can be constructed by computing the distance $d(0, R(0))$ for every $R$ in $\mathcal{R}_k$. Larger values of $k$ yield larger subsets of the full set of admissible distances.

\

The locus of points which are moved certain prescribed distance by a hyperbolic transformation can be expressed in terms of distance to the so-called axis of the transformation (invariant geodesic), by the following well-known formula for the displacement function (cf. \cite{Beardon_1983}).

\begin{proposition}  If $g$ is a hyperbolic transformation with translation length $T$ and axis $A$, then $$\sinh \frac{d(z,g(z))}{2} =\cosh d(z,A) \sinh \frac{T}{2}.$$
\end{proposition}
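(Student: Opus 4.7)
The plan is to reduce the identity to an explicit model calculation. Since both sides are invariant under conjugation by isometries of $\mathbb{D}$, I would first pass to the upper half-plane model $\mathbb{H}$ and conjugate $g$ so that its axis $A$ becomes the positive imaginary axis. A hyperbolic M\"obius transformation with axis the imaginary axis and translation length $T$ must then take the form $g(z) = e^{T} z$ (possibly after replacing $T$ by $-T$), which is all the normalization needed.

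Next, for an arbitrary $z = x + iy \in \mathbb{H}$, I would apply the standard formula $\cosh d(z, w) = 1 + \frac{|z - w|^{2}}{2\, \operatorname{Im}(z)\, \operatorname{Im}(w)}$ twice. First, the foot of the perpendicular from $z$ to the imaginary axis is $i|z|$, and the formula gives $\cosh d(z, A) = |z|/y$. Second, applying it to the pair $(z, g(z)) = (z, e^{T}z)$ yields $\cosh d(z, g(z)) = 1 + \frac{|z|^{2}(e^{T} - 1)^{2}}{2 e^{T} y^{2}}$.

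Finally, the identity $(e^{T} - 1)^{2}/e^{T} = 4 \sinh^{2}(T/2)$ rewrites the last equation as $\cosh d(z, g(z)) - 1 = 2 \cosh^{2} d(z, A)\, \sinh^{2}(T/2)$, and the half-angle relation $\cosh \alpha - 1 = 2 \sinh^{2}(\alpha/2)$ produces the claimed identity after taking positive square roots.

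There is no genuine obstacle here: this is a classical displacement formula and the argument is almost entirely bookkeeping once the axis is normalized. An alternative route would be to drop perpendiculars from $z$ and $g(z)$ to the axis, obtaining a Saccheri quadrilateral with right angles at the feet, and to invoke the standard Saccheri formula directly; this is slightly more conceptual, but it merely shifts the burden to proving or citing a separate hyperbolic trigonometry identity of comparable difficulty, so I would opt for the coordinate computation above.
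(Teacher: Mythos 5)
Your proof is correct: the normalization $g(z)=e^{T}z$ on $\mathbb{H}$, the computation $\cosh d(z,A)=|z|/y$, and the identity $(e^{T}-1)^{2}/e^{T}=4\sinh^{2}(T/2)$ all check out, and the half-angle step closes the argument. The paper does not actually prove this proposition (it cites Beardon's book), but your method is essentially identical to the one the paper uses for the companion glide-reflection formula --- conjugate to a normal form with the imaginary axis as axis and apply the standard distance formulas --- so there is nothing further to compare.
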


One can easily prove a similar result for glide-reflections, for which we still call translation length the displacement of points belonging to the axis:

\begin{proposition} If $g$ is a glide-reflection with translation length $T$ and axis $A$, then $$\cosh \frac{d(z,g(z))}{2} =\cosh d(z,A) \cosh \frac{T}{2} .$$
\end{proposition}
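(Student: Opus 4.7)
My plan is to imitate the geometric proof of the displacement formula for hyperbolic elements given in the preceding proposition, exhibiting a right hyperbolic triangle whose hypotenuse has length $d(z, g(z))/2$ and whose legs have lengths $d(z, A)$ and $T/2$. Let $p$ denote the foot of the perpendicular from $z$ to $A$, so that $d(z, p) = d(z, A)$. Set $q = g(p)$, which also lies on $A$ and satisfies $d(p, q) = T$, and let $m$ be the hyperbolic midpoint of the segment $[p, q] \subset A$.

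The crux of the argument will be the claim that the half-turn $\rho_m$ (rotation by $\pi$) around $m$ sends $z$ to $g(z)$. Indeed, $\rho_m$ fixes $m$, preserves $A$ setwise while reversing its orientation (so it swaps $p$ and $q$), interchanges the two half-planes bounded by $A$, and maps the perpendicular to $A$ at $p$ onto the perpendicular at $q$. It therefore carries $z$, which lies on the perpendicular at $p$ at distance $d(z, A)$ from $A$, to the unique point at the same distance from $q$ on the perpendicular at $q$ in the opposite half-plane; this point is precisely $g(z)$, by the defining geometric properties of a glide-reflection with axis $A$ and translation length $T$. Note that $\rho_m$ and $g$ are distinct isometries (one is orientation-preserving and the other is not), but they agree at the specific point $z$, which is all I need. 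Because $\rho_m(z) = g(z)$, the point $m$ is the hyperbolic midpoint of $[z, g(z)]$, so $d(z, g(z)) = 2\,d(z, m)$.

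To finish, I consider the triangle $zpm$. The side $[z,p]$ lies on the perpendicular from $z$ to $A$ and $[p, m]$ lies on $A$, so the triangle has a right angle at $p$, with $d(z, p) = d(z, A)$ and $d(p, m) = T/2$. The hyperbolic Pythagorean theorem then yields
$$
\cosh d(z, m) = \cosh d(z, A) \cosh \frac{T}{2},
$$
and substituting $d(z, m) = d(z, g(z))/2$ gives the claimed identity. The main obstacle I foresee is the careful justification of the half-turn claim, in particular checking that $g(z)$ lies on the correct side of $A$ with the correct foot of perpendicular. If the synthetic argument proves too delicate, a safe fallback is to normalize $A$ to be the imaginary axis in the upper half-plane so that $g(w) = -e^{T}\bar{w}$ and verify the identity directly, using $\cosh d(u, v) = 1 + |u-v|^2/(2\,\mathrm{Im}(u)\,\mathrm{Im}(v))$ together with $\cosh d(z, A) = \sqrt{x^2+y^2}/y$; after simplification the expression becomes $2\cosh^2(T/2)\cosh^2 d(z, A) - 1$, from which the stated formula follows by the double-angle identity for $\cosh$.
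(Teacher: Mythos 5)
Your argument is correct, but it proceeds quite differently from the paper. The paper conjugates the glide-reflection to the normal form $h(z)=-k\overline{z}$, $k>1$, on the upper half-plane and simply computes the three quantities $\cosh\frac{d(z,h(z))}{2}$, $\cosh d(z,A)$ and $\cosh\frac{T}{2}$ from Beardon's explicit distance formulas, after which the identity is read off by multiplication; this is essentially your ``fallback'' computation (with $k=e^{T}$), and it does simplify as you predict. Your main, synthetic route instead identifies the midpoint $m$ of $[p,g(p)]$ (with $p$ the foot of the perpendicular from $z$ to $A$) as the midpoint of $[z,g(z)]$ and then applies the hyperbolic Pythagorean theorem to the right triangle $zpm$. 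The crucial half-turn claim is sound: $\rho_m\circ g$ is an orientation-reversing isometry that fixes $p$, preserves each half-plane bounded by $A$, and acts on $A$ as the reflection in $p$, so it is the reflection in the perpendicular to $A$ at $p$ and therefore fixes $z$; hence $\rho_m(z)=g(z)$ as you assert. What your approach buys is a coordinate-free explanation of why the formula has exactly the Pythagorean shape $\cosh=\cosh\cdot\cosh$, and it illuminates the contrast with the preceding proposition for hyperbolic elements, where $g(z)$ stays on the same side of $A$, no such midpoint exists on the axis, and one is led to a Saccheri-quadrilateral identity producing the $\sinh$ instead. The paper's computation is shorter to write and verifies everything in one normalized model; either proof is acceptable.
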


\begin{proof}
Since any glide reflection is conjugate to a map of the form $h:z\mapsto -k\overline{z},\ k>1$, all we have to do is to prove the result for such transformations $h$. In order to check that the formula holds, we compute the three magnitudes involved using well-known expressions for the hyperbolic sine or cosine of the hyperbolic distance between points of the upper half-plane $\mathbb{H}$ (see \cite{Beardon_1983}, Theorem 7.2.1).

First, for the term on the left of the equation we have
$$\cosh \frac{d(z,h(z))}{2}=\frac{|z-\overline{h(z)}|}{2\sqrt{\mathrm{Im}(z)\mathrm{Im}(h(z))}}= \frac{|z||1+k|}{|y|2\sqrt{k}},$$
where $z=x+iy$. Now, since the axis $A'$ of $h$ is the imaginary axis, we can compute  
$$\cosh d(z,A')= \cosh d(z,i|z|)= 
1+\frac{|z-i|z||^2}{2 \mathrm{Im}(z) \mathrm{Im}(i|z|)}=\frac{|z|}{y}$$
and, finally, we obtain $T$ using that $z=i$ belongs to the axis, so that
$$\cosh \frac{T}{2} =\cosh \frac{d(i, ki)}{2} =\frac{|i+ki|}{2 \sqrt{\mathrm{Im}(i)} \sqrt{\mathrm{Im}(ki)}}=\frac{(1+k)}{2\sqrt{k}}.$$ 
\end{proof}

From the last two propositions we see that the locus of points with a prescribed displacement under a hyperbolic isometry or a glide reflection coincides with the locus of points at certain distance from the axis of the transformation. This set has two components, one at each side of the axis, which are arcs of generalized circles (i.e. arcs of circumference or of straight lines), see \cite{Beardon_1983}, Section 7.20. We call such sets \emph{bananas} for obvious reasons (see Figure \ref{14-3}).

\

We have now all the ingredients needed for our brute force project. Assume we want to decide if there exists a compact hyperbolic $k$-extremal surface $S$ of genus $g$ with more than one extremal $k$-packing.  The plan is as follows:
\begin{enumerate}
\item We know that $S\simeq \mathbb{D}/K$, and that we can obtain a fundamental domain $F$ for $K$ joining $k$ regular $N$-gons of angle $2\pi/3$, where $N:=\frac{6(k-\chi)}{k} $. There are different  possible combinatorial configurations  for the relative position of the $k$ polygons, so we choose one to start with. Note that 
  we may need to change to a different configuration if  the process is not successful in the end.
  \item For the chosen fundamental domain, we compute the set $\mathcal{P}$ of all the possible side-pairing transformations between all the possible pairs of edges of the polygons, admitting orientation reversing transformations if we want to construct a non-orientable surface. The set  $\mathcal{P}$ is usually very large.
\item We compute a set as large as possible of admissible distances associated to a tessellation by regular $N$-gons of angle $2\pi /3$.
\item We choose a starting pair of transformations $p_1, p_2 \in  \mathcal{P}$, and compute the set $\mathcal{C}$ of points of $F$ that show certain  admissible displacements under $p_1$ and $p_2$, which will be our \emph{candidates}. By what has been said above, there are finitely many candidate points for every choice of  $F, p_1$ and $p_2$, and there are finitely many  choices of such a triple. Again, if our construction does not produce the surface we look for with this choice of $p_1, p_2$, we change to a different starting pair.
\item We compute the displacement of every point  $c \in \mathcal{C}$ under all the transformations in $ \mathcal{P}$, in order to determine the subset $ \mathcal{P}_c \subset  \mathcal{P}$ consisting of those transformations moving $c$ an admissible distance.
\item We check if there are enough transformations in $ \mathcal{P}_c $ in order to construct a full set of side-pairing transformations between pairs of edges of $F$ such that every vertex cycle is surrounded by a full $2\pi $ angle. If this is the case, the group $K$ generated by these transformations uniformizes a $k$-extremal surface of genus $g$, with an obvious extremal $k$-packing formed by the discs inscribed to the $k$-polygons that form $F$. Additionally,  the point $c$ may be the center of one of the discs of a second \emph{hidden} $k$-extremal disc.
\item We determine if there exists another configuration $F'$ of $k$ regular $N$-polygons which is also a fundamental domain for $K$, one of the polygon centers being the candidate $c$. If this is the case,  a second extremal $k$-packing  exists in $S=\mathbb{D}/K$. In practice this is a difficult step unless we can show the existence of an automorphism of $S$ moving the obvious $k$-packing to the hidden one.
\end{enumerate}

\section{An explicit example: the case $N=14$.}

 We illustrate now our strategy  for the case $N=14$, and recall our study can be easily adapted with minor modifications to the seven relevant values of $N$, namely $N=7,8,9,10,11,14$ and $16$.
 
 \
 
The primitive $N=14$ example we are looking for is a $3$-extremal compact non-orientable surface $S$ of genus $6$, which has as fundamental region the connected union $F$ of three regular $14$-gons with angle $\frac{2\pi}{3}$.

\begin{enumerate}
\item We begin with the region $F$ in Figure \ref{14-1}, consisting of three regular $14$-gons with angle $\frac{2\pi}{3}$. We label the edges of each polygon counterclockwise from $0$ to $13$.  There is no guarantee about being able to construct our surface starting with the domain $F$: if the rest of the process would fail, we would have to come back to this point and start again with a different shaped $F$.

\begin{figure}[!htbp]
\begin{center}
\includegraphics[width=0.5\textwidth]{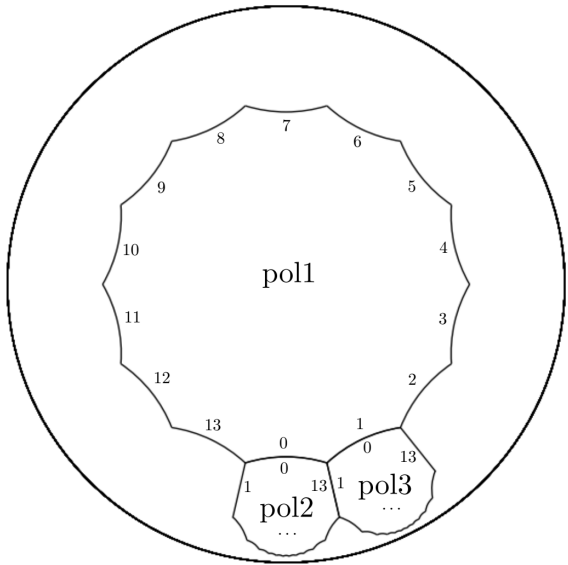} 
\caption{The starting fundamental region $F$.}\label{14-1}
\end{center}
\end{figure}

\item We compute all the conformal and anticonformal transformations that can be used to identify edges of $F$, collecting all this information in a list that we  call $L$ (see \cite{webpage} for the computer code and the results). The elements of $L$ are encoded with information like 
$$[\text{'hyperbolic'},\ \text{'de pol1 a pol3'},\ 8,\ 11]$$ 
this example meaning a conformal transformation which sends the inner triangle based at the edge $8$ of polygon $1$ to the outer triangle based at the edge $11$ of polygon $3$; see Figure \ref{14-2}.

\begin{figure}[!htbp]
\begin{center}
\includegraphics[width=0.5\textwidth]{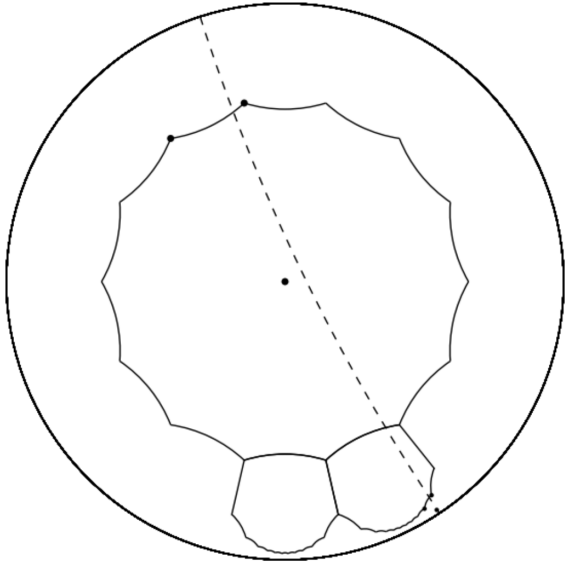} 
\caption{Example of a transformation $\gamma$  in $L$, together with its axis. Here $\gamma$ sends the edge 8 of pol1 to the edge 11 of pol3.}\label{14-2}
\end{center}
\end{figure}

\item We compute a set $D$ of admissible distances provided by the transformations in $\mathcal{R}_5$ (recall the notation for $\mathcal{R}_5$ after the proof of Lemma \ref{le:dists}). See \cite{webpage} again to see the code employed and the data obtained.
\item We choose the following starting pair of side-pairing transformations:

\smallskip

\centerline{$p_1=$ ['orientation-reversing hyperbolic', 'de pol1 a pol1', 13, 10]}
\centerline{$p_2=$ ['hyperbolic', 'de pol1 a pol2', 2, 7]}

Now we compute the set of points showing an admissible displacement under $p_1$ and $p_2$; in Figure \ref{14-3} we show some of the corresponding bananas (the code is once more available at \cite{webpage}).  

\begin{figure}[!htbp]
\begin{center}
\includegraphics[width=0.8\textwidth]{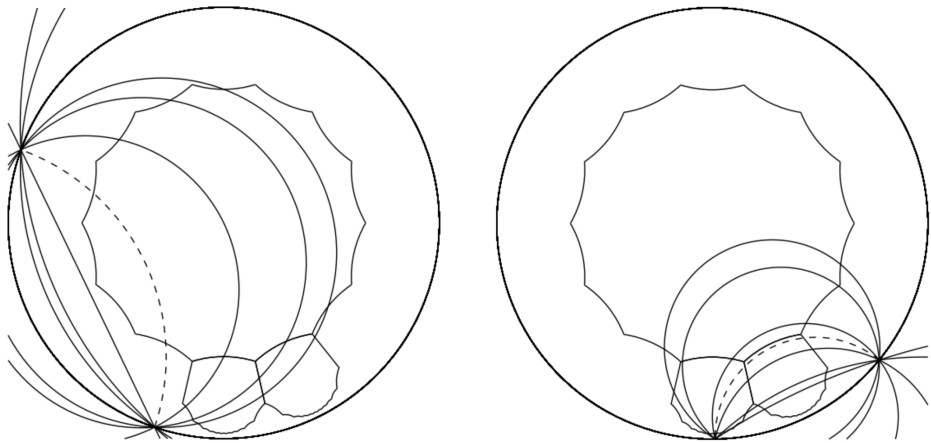} 
\caption{The starting set of  bananas corresponding  to $p_1$ (left) and $p_2$ (right). The dashed lines indicate the axes of these transformations.}\label{14-3}
\end{center}
\end{figure}

Figure \ref{14-4} shows some of the points in the set $\mathcal{C}$ of candidate points obtained as intersection of the bananas of $p_1$ and $p_2$.

\begin{figure}[!htbp]
\begin{center}
\includegraphics[width=0.6\textwidth]{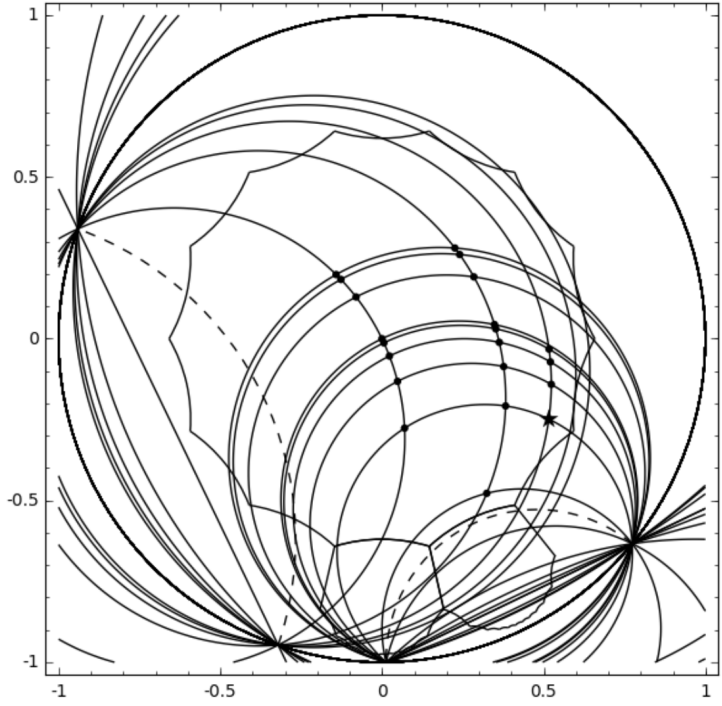} 
\caption{The construction of the set $\mathcal{C}$ of candidate points via  intersection of bananas of $p_1$ and $p_2$.}\label{14-4}
\end{center}
\end{figure}

\item For each of the points $c \in \mathcal{C}$, we collect in a list $L_c$ all the side pairings $p$ in $L$ which have a banana that passes through $c$ (computationally, we work up to an error $<10^{-4}$, see the code in \cite{webpage}).  For example, the list for the candidate closer to the bottom  of Figure \ref{14-4}, which has coordinates $c\simeq 0.324 - 0.478i$, is  

\smallskip

\centerline{['orientation-reversing hyperbolic', 'de pol1 a pol1', 10, 13]}
\centerline{ ['orientation-reversing hyperbolic', 'de pol1 a pol1', 13, 10]}
\centerline{['hyperbolic', 'de pol1 a pol2', 2, 7]}
\centerline{[orientation-reversing hyperbolic', 'de pol1 a pol2', 11, 1]}
\centerline{['hyperbolic', 'de pol2 a pol3', 8, 13]}

\item In the case of $c\simeq 0.324 - 0.478i$, we see for instance that $L_c$ does not contain a side pairing involving the edge 3 of pol1, so this point $c$ fails to be a good candidate. 

Only two of the points of $\mathcal{C}$ marked  in Figure \ref{14-4} satisfy the needed condition that $L_c$ contains enough transformations so that we can choose a side-pairing for every edge of $F$.
One is obviously the origin, and the other one is $c\simeq 0.516-0.248i$ marked $\star$ in Figure \ref{14-4}.

The list $L_c$ is huge, and contains several side pairings for every edge. For example, if we focus on edge $3$ of pol1, we find in $L_c$  the following side pairings:

\smallskip

\centerline{ ['hyperbolic', 'de pol1 a pol1', 3, 11]}  
\centerline{['orientation-reversing hyperbolic', 'de pol1 a pol1', 3, 8]}  
\centerline{['hyperbolic', 'de pol1 a pol2', 3, 10]}  
\centerline{['orientation-reversing hyperbolic', 'de pol1 a pol2', 3, 7]}  
\centerline{['hyperbolic', 'de pol1 a pol3', 3, 11]} 
\centerline{['orientation-reversing hyperbolic', 'de pol1 a pol3', 3, 8]}

Considering all the elements of $L_c$ that correspond to each edge, all we have to do is to choose, by direct inspection, a set of transformations that would pair all the edges of $F$ and generate a group uniformizing a compact surface. The precise set we find is:

\smallskip

\centerline{['orientation-reversing hyperbolic', 'de pol1 a pol1', 13, 10]}
\centerline{['hyperbolic', 'de pol1 a pol2', 2, 7]}
\centerline{['orientation-reversing hyperbolic', 'de pol2 a pol1', 1, 11]} 
\centerline{['hyperbolic', 'de pol3 a pol2', 13, 8]} 
\centerline{['hyperbolic', 'de pol1 a pol1', 12, 7]} 
\centerline{['orientation-reversing hyperbolic', 'de pol1 a pol2', 8, 2]} 
\centerline{['hyperbolic', 'de pol1 a pol2', 5, 3]}
\centerline{['orientation-reversing hyperbolic', 'de pol1 a pol1', 9, 6]}
\centerline{['hyperbolic', 'de pol1 a pol3', 4, 5]}
\centerline{['hyperbolic', 'de pol1 a pol2', 3, 10]} 
\centerline{['hyperbolic', 'de pol2 a pol3', 9, 6]} 
\centerline{['hyperbolic', 'de pol3 a pol3', 12, 7]} 
\centerline{['hyperbolic', 'de pol2 a pol2', 6, 11]} 
\centerline{['hyperbolic', 'de pol2 a pol3', 4, 4]} 
\centerline{['hyperbolic', 'de pol3 a pol3', 3, 11]} 
\centerline{['hyperbolic', 'de pol2 a pol3', 5, 10]} 
\centerline{['hyperbolic', 'de pol2 a pol3', 12, 9]} 
\centerline{['hyperbolic', 'de pol3 a pol3', 2, 8]}

\item We have constructed a surface $S=\mathbb{D}/K$ that has a point $P\simeq [0.516-0.248i]_K$ that may play a special role, as it satisfies the metric requirements that a center of one of the three discs of an extremal $3$-packing should satisfy. Showing that this second $3$-packing actually exists could be quite difficult, but before trying anything more sophisticated it is worth checking if, as it happens in the classical $k=1$ case (cf. \cite{Girondo_Gonzalez-Diez_2002_2}), an automorphism of $S$ is sending the original $3$-packing somewhere else.

\begin{figure}[!htbp]
\begin{center}
\includegraphics[width=0.5\textwidth]{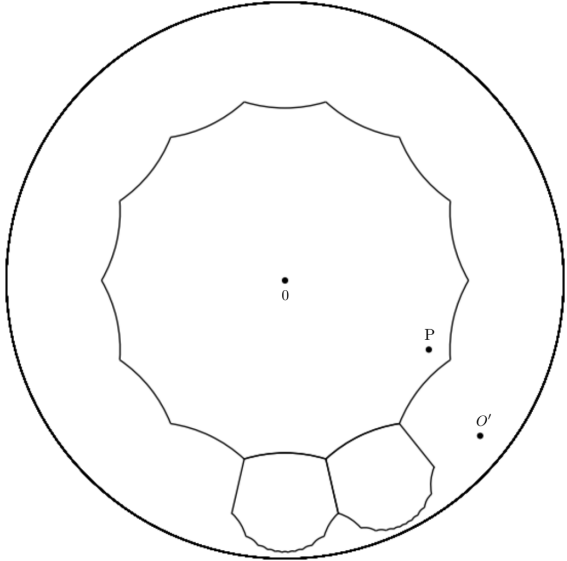} 
\caption{Our example for $N=14$.}\label{14-5}
\end{center}
\end{figure}

This is exactly what happens also in this case: if we denote $O':=(ca)^2b(0)$ we can show, after some computational work, that the elliptic element $\tau$ of order 2 fixing the midpoint between $P$ and $O'$ normalizes the group $K$, thus it induces an automorphism of $S$. The way how we show it is by conjugating every side-pairing of $F$ generating $K$ and checking that the resulting hyperbolic isometry also belongs  to $K$ (see \cite{webpage} for the data).

\

In order to have a nicer image of the surface, we replace pol2 by the (equivalent) polygon centered at $O'$ (see the left side of Figure \ref{14-8}). Even better, if we move the domain to be centered in the disc, we obtain our final picture (at the right side of Figure \ref{14-8}), where the side-pairing generators of the uniformizing group are also indicated. We recall that an order two automorphism transposes $P$ and $O'$.

\begin{figure}[!htbp]
\begin{center}
\includegraphics[width=1.0\textwidth]{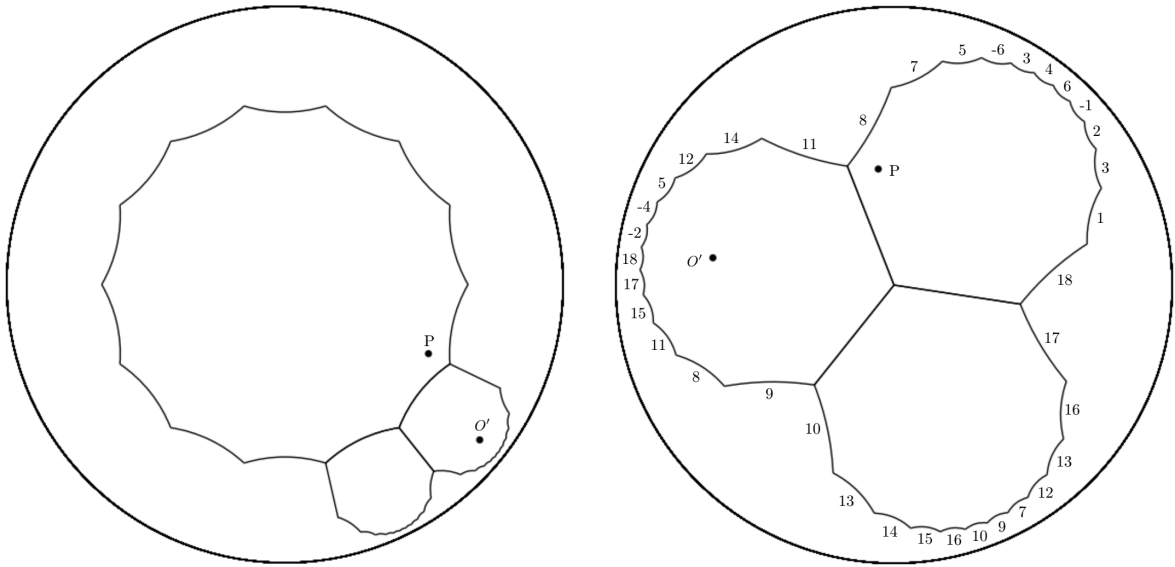} 
\caption{Two fundamental domains for the group uniformizing the compact non-orientable $3$-extremal surface of genus 6 constructed. The labels on the right side describe the side-pairing transformations.}\label{14-8}
\end{center}
\end{figure}

\end{enumerate}

\bibliography{references}  
\bibliographystyle{alpha}

\end{document}